\newcommand{\dd}{\mathrm{d}}
\newcommand{\R}{\mathbb{R}}
\newcommand{\E}{\mathbf{E}}
\newcommand{\p}{\mathbf{P}}
\newcommand{\ii}{\mathfrak{i}}
\newcommand{\bone}{\mathds 1}
\newcommand{\dint}{\int \hspace*{-5pt} \int}
\theoremstyle{plain}
\newtheorem{lemma}{Lemma}
\newtheorem{theorem}{Theorem}
\newtheorem{proposition}{Proposition}
\newtheorem{corollary}{Corollary}
\theoremstyle{remark}
\newtheorem{remark}{Remark}
\title{Tail behavior and almost sure growth rate of supOU processes}
\author[1]{Danijel Grahovac\thanks{dgrahova@mathos.hr}}
\author[2]{P\'eter Kevei\thanks{kevei@math.u-szeged.hu}}
\affil[1]{School of Applied Mathematics and Informatics, J. J. Strossmayer University of Osijek, Trg Ljudevita Gaja 6, 31000 Osijek, Croatia}
\affil[2]{Bolyai Institute, University of Szeged, Aradi v\'ertan\'uk tere 1, 6720 Szeged, Hungary}
\date{}
\begin{document}

\maketitle

\begin{abstract}
In this paper we consider sample path growth of superpositions of  
Ornstein--Uhlenbeck type processes (supOU). SupOU processes
are stationary infinitely divisible 
processes defined as integrals with respect to a random measure. They allow marginal 
distributions and correlations to be modeled independently. Our results show that the 
almost sure behavior is primarily governed by the tail of the marginal distribution. In 
particular, we obtain a general integral test for the sample path growth that covers both 
heavy-tailed and light-tailed scenarios. We also investigate the tail behavior of the 
marginal distributions in connection with the characteristics of the underlying random 
measure.

\textit{AMS 2020 Subject Classiﬁcations:} 60G17, 60F15, 60G55, 60G57

\textit{Keywords:} supOU processes, infinitely divisible random measure, 
integral test, almost sure properties, rate of growth
\end{abstract}

\section{Introduction}

SupOU processes are strictly stationary processes defined as integrals
\begin{equation} \label{eq:supOU-def}
X(t) = \dint_{(0,\infty) \times (-\infty,t]} e^{-x(t-s)} \Lambda(\dd x, \dd s),
\end{equation}
with respect to an infinitely divisible independently scattered random measure $\Lambda$ given by
\begin{equation} \label{eq:Lambda}
\Lambda(\dd x, \dd s) = 
m \, \pi(\dd x) \dd s + 
\int_{(0,1]} z  (\mu - \nu) ( \dd x, \dd s, \dd z) + \int_{(1,\infty)} z  \mu ( \dd x, \dd s, \dd z),
\end{equation}
where $m \in \R$ and $\mu$ is a Poisson random measure on 
$(0,\infty) \times \R \times (0,\infty)$
with intensity measure 
\begin{equation*}
\nu (\dd x, \dd s, \dd z) = \pi(\dd x) \dd s \, \lambda(\dd z).
\end{equation*}
Here $\pi$ is a measure on $(0,\infty)$, such that $\int x^{-1} \pi(\dd x) < \infty$, and 
$\lambda$ is a L\'evy measure (i.e.~$\int_{(0,\infty)} ( 1 \wedge z^2) \lambda(\dd z) < 
\infty$) satisfying $\int_{(1,\infty)} \log z \, \lambda (\dd z ) < \infty$. We note that the random measure $\Lambda$ in \eqref{eq:Lambda}  may also include a Gaussian component and 
$\lambda$ may be supported on the 
whole $\R$. For notational ease, here we focus on supOU processes with 
$\lambda$ allowing positive jumps only. However, all our results remain 
true for measures $\lambda$ allowing negative jumps as well.

This class of processes was introduced in \cite{Barn}
(with different parametrization, see Section \ref{sect:as})
and generalized to the multivariate case in \cite{barndorff2011multivariate}. 
SupOU processes have proved to be a flexible model allowing marginal 
distribution and dependence structure to be defined separately. Namely, the 
one-dimensional marginal distribution is determined by $m$ and the L\'evy measure $\lambda$. It is self-decomposable and corresponds to the marginal distribution of a L\'evy-driven Ornstein--Uhlenbeck process
\begin{equation}\label{eq:OU}
\int_{(-\infty,t]} e^{-x(t-s)} L(\dd s),
\end{equation}
where $L$ is a two-sided L\'evy process with L\'evy--Khintchine triplet 
$(m,0,\lambda)$ and $x>0$ (see e.g.~\cite{apple}). The superposition is obtained by averaging over randomized mean-reverting 
parameter $x$ following 
distribution $\pi$, if $\pi$ is indeed a probability measure. 
While the OU process in \eqref{eq:OU} has correlation function 
$e^{-xt}$, the supOU process has a correlation function 
$c\int_{(0,\infty)} x^{-1} e^{-x t} \pi(\dd x)$, assuming that both correlation
functions exist. Hence, the choice of $\pi$ provides
various forms of dependence, although always with monotonically decreasing correlations. 

Due to their flexibility, supOU processes have many applications. 
Since they cover most of the stylized facts typically encountered 
in financial data, 
positive supOU processes provide a plausible model for stochastic 
volatility. The properties of supOU stochastic volatility models have been 
extensively studied in 
\cite{bn2013supOUSV,curato2019,fuchs2013,moser2011,stelzer2015derivative}. SupOU 
processes have also been used in astrophysics for assessing the mass of black holes 
\cite{kelly2011stochastic,kelly2013active}. 

The extremal behavior of supOU processes was analyzed in \cite{FasenKlupp} based on 
the general results of \cite{Fasen05}. 
L\'evy-driven spatial random fields with regularly varying L\'evy measure were 
treated in \cite{rm2022}. Ergodic properties of 
supOU processes are established in \cite{fuchs2013}. For integrated supOU processes, limit 
theorems have been investigated in \cite{GLST2019,GLT2019Limit,GLT2020multifaceted}.
All these papers concern distributional properties. In \cite{basse2020sufficient}, a sufficient condition is provided for the existence of
c{\`a}dl{\`a}g sample paths of stable supOU processes (see also \cite{MR3449388}).

In this paper we derive almost sure growth rates of the paths of supOU processes. 
Almost sure growth rates of stochastic integrals with respect to a L\'evy 
basis were investigated in \cite{CK, CK22} for the solution to the stochastic 
heat equation, and in the companion paper \cite{GK2} for the integrated 
supOU process.

In Section \ref{sect:tail} we analyze the marginal distributions of $(X(t))$.
We recall some known results on the regular variation of the tail, 
and we prove a generalization to dominatedly varying distributions. 
In particular, in Lemma \ref{lemma:domvar} we obtain the tail asymptotics 
of the L\'evy measure of $X$ in terms of the jump measure $\lambda$, 
extending the results in \cite{FasenKlupp}. This tail estimate plays a key role in 
determining the growth rate for heavy-tailed $\lambda$, see Corollary 
\ref{cor:supOU-as-D}. 

The main results of the paper are contained in 
Section \ref{sect:as}, where we turn to the almost sure path behavior. In contrast to Section \ref{sect:tail}, here we focus only on supOU processes with jump measure 
satisfying $\int_{(0,1]}z\lambda (\dd z)<\infty$. 
Proposition \ref{prop:largexi} shows that if $\pi$ is 
an infinite measure and $\lambda$ has unbounded support, then in any open 
interval $X(t)$ is unbounded. In our main result, Theorem \ref{thm:as},
we obtain a general integral test allowing both heavy-tailed and light-tailed
jump measure $\lambda$. As a consequence, we show that for any normalizing function $f$ and heavy-tailed L\'evy measures, $\limsup_{t \to \infty} X(t) / f(t)$
is either 0, or $\infty$, that is, as usual for heavy-tailed processes, 
there is no proper normalization almost surely.
In the light-tailed scenario we show that for bounded jump measures $\lambda$
the almost sure growth rate is $\log t / (\log \log t)$ 
(Theorem \ref{thm:as-boundedlambda}),  while for measure 
with exponential tail, the rate is $\log t$ (Corollary \ref{cor:exp}).

As noted before, all our results remain true if $\lambda$ is supported on the 
whole $\R$, since positive and negative jumps are independent, and so are the corresponding 
processes; see Remark \ref{remark:pos-neg} after Theorem \ref{thm:as}. 
Similarly, if there is a Gaussian component in \eqref{eq:Lambda}, the 
corresponding Gaussian supOU is independent of the non-Gaussian part.
Almost sure growth of Gaussian processes, i.e.~law of iterated logarithm,
is well-understood, see e.g.~\cite{orey1972, watanabe1970}.

\section{Tail behavior and moments} \label{sect:tail}

For any measure $Q$ on $(0,\infty)$ introduce the notation 
\begin{equation*}
m_p ( Q ) = \int_{(0,\infty)} x^{p} Q (\dd x),
\end{equation*}
and for the tail we put $\overline Q(x) = Q((x,\infty))$, $x > 0$.

By Theorem 3.1 in \cite{Barn}
\[
\begin{split}
\E e^{\ii \theta X(t)} & = 
\exp \left\{ \ii \theta a + 
\int_{(0,\infty)} 
\left( e^{\ii \theta y} - 1 - \ii \theta y \bone(y \leq 1) \right) 
\eta (\dd y) \right\},
\end{split}
\]
where the L\'evy measure $\eta$ is defined as
\[
\eta(B) = m_{-1}(\pi) \,
\mathrm{Leb} \times \lambda \left( 
\{ (u,z) : e^{-u} z \in B,  u > 0, z > 0 \}  \right),
\]
and $a \in \R$ is an explicit constant. 
Recall that marginal distributions of supOU coincide with the distribution
of the corresponding L\'evy-driven OU in \eqref{eq:OU}.
The tail of $\eta$ can be easily expressed with the tail of $\lambda$ as
\begin{equation} \label{eq:eta-bar}
\begin{split}
\overline \eta(r) & = \eta( (r,\infty) ) = 
m_{-1}(\pi) \dint \bone( e^{-u} z > r) \dd u \lambda(\dd z) \\
& = m_{-1}(\pi) \int_0^\infty \overline \lambda(r e^u) \dd u
= m_{-1}(\pi) \int_{r}^\infty \frac{\overline \lambda(z)}{z } \dd z .
\end{split}
\end{equation}

This simple relation of the two tails has immediate consequences.
Let $\mathcal{RV}_{\alpha}$ denote the class of regularly varying functions
with index $\alpha$.
As a consequence of \eqref{eq:eta-bar}, the tail of $\lambda$
is regularly varying if and only if the tail of $\eta$ is 
regularly varying. More precisely, 
it was shown in Proposition 3.1 in \cite{FasenKlupp} 
(assuming also $m_0(\pi) < \infty$, which is irrelevant here)
that
$\overline \lambda \in \mathcal{RV}_{-\gamma}$ for some $\gamma > 0$
if and only if $\overline \eta \in \mathcal{RV}_{-\gamma}$, and 
in this case
\[
\overline \eta (r ) \sim m_{-1}(\pi) \frac{1}{\gamma} \overline \lambda(r), 
\quad r \to \infty,
\]
where $\sim$ stands for asymptotic equivalence, that is
$f(x) \sim g(x)$ as $x \to \infty$, if $\lim_{x \to \infty} \tfrac{f(x)}{g(x)} = 1$.
For the same result but in a much broader context
see \cite[Example 5.2]{Jacobsen}, where it is also shown that 
the same equivalence holds in case of double-sided Ornstein--Uhlenbeck processes.

The tail behavior of the supremum was determined in
\cite[Proposition 3.2]{Fasen05}, see also \cite[Proposition 3.3]{FasenKlupp}.
Under the assumption $m_0(\pi) < \infty$ (and here it is relevant, see 
Proposition \ref{prop:largexi}) 
it was shown in \cite{Fasen05} that if
$\overline \lambda \in \mathcal{RV}_{-\gamma}$ for some $\gamma > 0$, 
then, for any $T > 0$,  as $r \to \infty$
\begin{equation}\label{eq:supXlambda}
\p \Big( \sup_{t \in [0,T]} X(t) > r \Big) \sim 
(T m_{0}(\pi) + \gamma^{-1} m_{-1}(\pi) ) \overline \lambda(r).
\end{equation}

For $\gamma = 0$, that is if $\overline \lambda$ is slowly varying, Proposition 1.5.9b in \cite{BGT} implies that $\overline \eta 
\in \mathcal{RV}_0$, and as $r \to \infty$
\[
\frac{\overline \eta(r)}{\overline \lambda(r)} \to \infty.
\]
However, the converse does not hold, i.e.~$\overline \eta \in \mathcal{RV}_0$
does not imply that $\overline \lambda \in \mathcal{RV}_0$, see the 
remark after Corollary 8.1.7 in \cite{BGT}, or Theorem 1.1 in \cite{Kevei}.

Regularly varying tails are subexponential, therefore in this case 
the tail of the infinitely divisible random variable $X(t)$ is the 
same as the tail of its L\'evy measure (see e.g.~\cite{EGV}), so
\begin{equation} \label{eq:X-eta-lambda}
\p ( X(t) > r) \sim \overline \eta (r) \sim 
 m_{-1}(\pi) \frac{1}{\gamma} \overline \lambda(r) \quad \text{as } r \to \infty.
\end{equation}
In particular, by \eqref{eq:supXlambda} 
the tail of $X(t)$ and $\sup_{t \in [0,1]} X(t)$ are 
asymptotically the same up to a constant factor.

However, to estimate growth rate much weaker conditions are sufficient.
Therefore, we turn to more general tail behavior. 
A nonincreasing function $f$ is \emph{dominatedly varying}, $f \in \mathcal{D}$, if
\begin{equation*} 
\limsup_{x \to \infty} \frac{f(x)}{f(2x)} < \infty;
\end{equation*}
see \cite[Section 1.10]{BGT}, \cite{ShimWat}.
The result in \cite{EGV} was extended in \cite[Proposition 4.1]{Watanabe} 
to dominatedly varying tails, showing that the ratio of the 
tails is bounded away from 0 and $\infty$. Next 
we show that dominated variation of $\overline \lambda$ implies the same 
for $\overline \eta$. This is a generalization of Proposition 3.1 in
\cite{FasenKlupp}.

\begin{lemma} \label{lemma:domvar}
Assume that $\overline \lambda \in \mathcal{D}$. Then $\overline \eta \in \mathcal{D}$,
and 
\begin{equation} \label{eq:lambda/eta}
\limsup_{r \to \infty} \frac{\overline \lambda(r)}{\overline \eta(r) } < \infty.
\end{equation}
Furthermore, if for some $c > 1$
\begin{equation} \label{eq:lambda-growth}
\limsup_{r \to \infty} 
\frac{\overline \lambda(c r)}{\overline \lambda(r)} < 1,
\end{equation}
then
\[
\limsup_{r \to \infty} \frac{\overline \eta(r) }{\overline \lambda(r)} < \infty.
\]
\end{lemma}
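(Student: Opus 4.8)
The starting point is the exact identity
\[
\overline\eta(r) = \int_r^\infty \frac{\overline\lambda(z)}{z}\,\dd z,
\]
from \eqref{eq:eta-bar}, which expresses $\overline\eta$ as a weighted integral of $\overline\lambda$ over its whole tail. Everything will be extracted from this one formula together with the hypotheses on $\overline\lambda$.

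**Step 1: $\overline\eta \in \mathcal D$ and the lower bound \eqref{eq:lambda/eta}.** For the lower bound I would simply note that
\[
\overline\eta(r) = \int_r^\infty \frac{\overline\lambda(z)}{z}\,\dd z \ge \overline\lambda(2r)\int_r^{2r}\frac{\dd z}{z} = (\log 2)\,\overline\lambda(2r),
\]
using that $\overline\lambda$ is nonincreasing, and then invoke $\overline\lambda\in\mathcal D$ to replace $\overline\lambda(2r)$ by a constant multiple of $\overline\lambda(r)$; this gives $\limsup_r \overline\lambda(r)/\overline\eta(r) < \infty$. For $\overline\eta\in\mathcal D$ I would split $\overline\eta(r) = \int_r^{2r} + \int_{2r}^\infty$; the first piece is at most $(\log 2)\overline\lambda(r)$, which by the lower bound just proved is $O(\overline\eta(2r))$, and the second piece is exactly $\overline\eta(2r)$, so $\overline\eta(r) \le C\,\overline\eta(2r)$. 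The point to be careful about is that the constant from $\overline\lambda\in\mathcal D$ is only available for $r$ large, so all the $\limsup$'s should be read as statements for $r\ge r_0$.

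**Step 2: the upper bound under \eqref{eq:lambda-growth}.** This is the substantive part. Fix $c>1$ and $q<1$ with $\overline\lambda(cr)\le q\,\overline\lambda(r)$ for all $r\ge r_0$; iterating gives $\overline\lambda(c^k r)\le q^k\overline\lambda(r)$. The plan is to decompose the tail integral into the dyadic-in-$c$ blocks $[c^k r, c^{k+1} r)$:
\[
\overline\eta(r) = \sum_{k\ge 0}\int_{c^k r}^{c^{k+1} r}\frac{\overline\lambda(z)}{z}\,\dd z \le \sum_{k\ge 0}\overline\lambda(c^k r)\,\log c \le (\log c)\,\overline\lambda(r)\sum_{k\ge 0} q^k = \frac{\log c}{1-q}\,\overline\lambda(r),
\]
valid for $r\ge r_0$, which is exactly $\limsup_r \overline\eta(r)/\overline\lambda(r) < \infty$. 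One should handle the initial segment $\int_r^{r_0}$ (empty once $r\ge r_0$) and note monotonicity of $\overline\lambda$ is what lets us bound $\overline\lambda(z)\le\overline\lambda(c^k r)$ on each block.

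**Main obstacle.** There is no deep obstacle here; the only thing requiring a little care is bookkeeping of the ``eventually'' thresholds and making sure the geometric series converges, i.e. genuinely using the strict inequality $q<1$ coming from $\limsup < 1$ in \eqref{eq:lambda-growth}. It is also worth remarking (and I would include a short sentence) that \eqref{eq:lambda-growth} is not implied by $\overline\lambda\in\mathcal D$ alone — dominated variation permits $\overline\lambda$ to be ``flat'' along subsequences, e.g. slowly varying tails, for which the ratio $\overline\eta/\overline\lambda$ does blow up by \eqref{eq:X-eta-lambda}-type reasoning — so the extra hypothesis is genuinely needed for the final conclusion. Combining Steps 1 and 2, when \eqref{eq:lambda-growth} holds the two tails are comparable up to constants, $\overline\eta(r)\asymp\overline\lambda(r)$.
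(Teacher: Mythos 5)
Your proposal is correct and follows essentially the same route as the paper: the lower bound via $\overline\eta(r)\ge(\log 2)\overline\lambda(2r)$ plus one application of dominated variation, the split $\overline\eta(r)=\int_r^{2r}+\overline\eta(2r)$ for $\overline\eta\in\mathcal D$, and the geometric decomposition over blocks $[c^k r,c^{k+1}r)$ under \eqref{eq:lambda-growth}. The only cosmetic difference is that in Step 1 you bound the piece $\int_r^{2r}$ by invoking the already-proved lower bound (at the shifted argument $2r$, which costs one extra application of $\overline\lambda\in\mathcal D$), whereas the paper substitutes $y=2z$ directly; both yield $\overline\eta(r)\le C\,\overline\eta(2r)$.
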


\begin{proof}
Since $\overline \lambda \in \mathcal{D}$, there exists $c_1 > 0, t_0 > 0$, such
that for $t > t_0$
\begin{equation*} 
\overline \lambda(t) \leq c_1 \overline \lambda(2t). 
\end{equation*}
Thus, for $t > t_0$
\[
\begin{split}
\overline \eta (t) & = 
m_{-1}(\pi)
\int_t^\infty \frac{\overline \lambda(z)}{z} \dd z 
= 
m_{-1}(\pi) \int_t^{2t} \frac{\overline \lambda(z)}{z} \dd z + \overline \eta(2t) \\
& \leq m_{-1}(\pi) c_1 \int_{2t}^{4t} \frac{\overline \lambda(y)}{y} \dd y + \overline 
\eta(2t) 
\leq (c_1 + 1) \overline \eta(2t),
\end{split}
\]
showing that $\overline \eta \in \mathcal{D}$. 

Similarly, for $t \geq t_0$
\[
\begin{split}
\overline \eta(t) & \geq m_{-1}(\pi) 
\int_t^{2t}  \frac{\overline \lambda(z)}{z} \dd z 
\geq m_{-1}(\pi) \overline \lambda(2t) \log 2 
\geq m_{-1}(\pi) \frac{\log 2}{c_1} \overline 
\lambda(t),
\end{split}
\]
proving \eqref{eq:lambda/eta}.

Assume \eqref{eq:lambda-growth}.  
Then, for $t$ large enough, for some $\delta > 0$ and $k = 0,1,\ldots$
\[
\begin{split}
\overline \eta( c^k t) - \overline \eta( c^{k+1} t) & = 
m_{-1}(\pi)
\int_{c^k t}^{c^{k+1} t} \frac{\overline \lambda (z)}{z} \dd z \\
& \leq 
m_{-1}(\pi) \overline \lambda( c^k t) \log c 
\leq m_{-1}(\pi) \overline \lambda (t)(1-\delta)^{k} \log c.
\end{split}
\]
Thus
\[
\overline \eta(t) \leq m_{-1}(\pi) \overline \lambda(t) \frac{\log c}{\delta},
\]
and the proof is complete.
\end{proof}

We note that if $\lambda$ has exponential, or lighter tail, then
\eqref{eq:lambda/eta} does not hold. Roughly speaking, if 
$\lambda$ has light tail, then $\eta$ has even lighter, 
if $\lambda$ has a power law tail, then $\eta$ has it too, and they 
are of the same order, while  
if $\lambda$ has a slowly varying tail, then $\eta$ has 
an even heavier slowly varying tail.

\medskip

From the tail behavior we can obtain necessary and 
sufficient conditions for the existence of moments.
A function $g$ is submultiplicative, if there exists $a > 0$ such 
that for any $x,y$, $g(x+y) \leq a g(x) g(y)$.
For a submultiplicative function $g$,
$\E g(X(t)) < \infty$ if and only if $\int_{(1,\infty)} g(x)  \eta (\dd x) < \infty$
(see e.g.~\cite[Theorem 25.3]{Sato}). Since
\begin{equation} \label{eq:eta-integral}
\begin{split}
\int_{(1,\infty)} g(x) \eta(\dd x)
& = m_{-1}(\pi) \int_{(0,\infty)} \int_{0}^\infty 
g(e^{-u} z) \bone (e^{-u} z > 1 ) \dd u \lambda(\dd z) \\
& = m_{-1}(\pi) \int_{(1,\infty)} \int_1^z g(y) y^{-1} \dd y \, \lambda(\dd z)  \\
& = m_{-1}(\pi) \int_1^\infty g(y) y^{-1} \, \overline \lambda(y) \, \dd y,
\end{split}
\end{equation}
we obtain the following.

\begin{corollary} \label{cor:Xmoments}
Let $g$ be a nonnegative submultiplicative function. Then
$\E g(X(t)) < \infty$ if and only if 
$\int_{1}^\infty g(y) y^{-1} \overline \lambda(y) \dd y < \infty$.
In particular, for any $\beta > 0$,
$\E X(t)^\beta < \infty$ if and only if 
$\int_{(1,\infty)} z^\beta \lambda(\dd z) < 
\infty$.
\end{corollary}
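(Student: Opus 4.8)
The plan is to apply the general characterization of moments for infinitely divisible distributions to $X(t)$, whose L\'evy measure is $\eta$, and then translate the resulting integrability condition on $\eta$ into one on $\lambda$ via the explicit formula \eqref{eq:eta-integral}. First I would invoke \cite[Theorem 25.3]{Sato}: for a submultiplicative, locally bounded, nonnegative function $g$, the distribution of an infinitely divisible random variable with L\'evy measure $\eta$ has a finite $g$-moment precisely when $\int_{(1,\infty)} g(x)\, \eta(\dd x) < \infty$. Since the marginal law of $X(t)$ is infinitely divisible with L\'evy measure $\eta$ and no issue arises from the Gaussian part (there is none) or the drift $A$ (which contributes a bounded shift, absorbed by submultiplicativity), this gives $\E g(X(t)) < \infty \iff \int_{(1,\infty)} g(x)\,\eta(\dd x) < \infty$.

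Next I would substitute the computation already carried out in \eqref{eq:eta-integral}, namely
\[
\int_{(1,\infty)} g(x)\, \eta(\dd x) = \int_1^\infty g(y)\, y^{-1}\, \overline\lambda(y)\, \dd y,
\]
which holds by Tonelli's theorem for nonnegative $g$ (the change of variables $y = e^{-u}z$ and swapping the order of integration is exactly the displayed calculation). Combining the two equivalences yields the first assertion: $\E g(X(t)) < \infty$ iff $\int_1^\infty g(y)\, y^{-1}\, \overline\lambda(y)\, \dd y < \infty$.

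For the special case $g(y) = y^\beta$ (extended by, say, $g \equiv 1$ on $[0,1]$ or simply noting $y^\beta \le 1 \cdot 2^\beta$ type bounds so that submultiplicativity $h(x+y) \le 2^\beta h(x)h(y)$ holds globally), the criterion becomes $\int_1^\infty y^{\beta - 1}\, \overline\lambda(y)\, \dd y < \infty$. Then I would rewrite this using Tonelli once more: $\int_1^\infty y^{\beta-1} \overline\lambda(y)\, \dd y = \int_1^\infty y^{\beta-1} \int_{(y,\infty)} \lambda(\dd z)\, \dd y = \int_{(1,\infty)} \int_1^z y^{\beta-1}\, \dd y\, \lambda(\dd z) = \beta^{-1} \int_{(1,\infty)} (z^\beta - 1)\, \lambda(\dd z)$, which is finite iff $\int_{(1,\infty)} z^\beta\, \lambda(\dd z) < \infty$ (the $\int_{(1,\infty)} 1 \,\lambda(\dd z) = \overline\lambda(1)$ term being automatically finite since $\lambda$ is a L\'evy measure). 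This completes the proof.

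The only genuinely delicate point is the bookkeeping around submultiplicativity at the origin: Theorem 25.3 in \cite{Sato} is usually stated for functions submultiplicative on all of $\R$ (or $[0,\infty)$) and bounded on compacts, so when $g$ is merely given as submultiplicative one should check it is dominated near $0$ — but since we only integrate $g$ against $\eta$ restricted to $(1,\infty)$ and against $\lambda$ restricted to $(1,\infty)$, the behavior of $g$ on $[0,1]$ is irrelevant to the integrability statements, and for the $y^\beta$ instance one simply notes $(x+y)^\beta \le 2^\beta(x^\beta + y^\beta) \le 2^{\beta+1}\max(x,y)^\beta \le 2^{\beta+1}(1 \vee x^\beta)(1 \vee y^\beta)$, so replacing $g$ by $1 \vee y^\beta$ if necessary makes it globally submultiplicative without changing either integral. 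I expect this to be routine; the substance of the corollary is entirely contained in \eqref{eq:eta-integral} and the cited Sato theorem.
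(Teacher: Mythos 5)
Your proposal is correct and follows essentially the same route as the paper: the authors likewise combine \cite[Theorem 25.3]{Sato} with the identity \eqref{eq:eta-integral} and then specialize to $g(y)=y^\beta$ via the same Tonelli computation. Your extra care about submultiplicativity near the origin is a harmless refinement of a point the paper leaves implicit.
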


\section{Almost sure growth} \label{sect:as}

In the original paper \cite{Barn}, 
supOU processes were introduced with a slightly different
parametrization, with $-x t + s$ in the exponent in \eqref{eq:supOU-def}. 
Representation \eqref{eq:supOU-def} is a moving 
average representation that was used in \cite{Fasen05} and 
\cite{barndorff2011multivariate} (see also \cite{FasenKlupp}). 
Necessary and sufficient conditions for the existence of $X(t)$ are  
the following:
(i) $\int_{[1,\infty)} \log z \, \lambda(\dd z ) <  \infty$ and 
(ii) $m_{-1} (\pi) = \int_{(0,\infty)} x^{-1} \pi(\dd x) < \infty$. 
In particular, $\pi$ need not be a finite (probability) measure. However, the 
latter, i.e.~$m_0(\pi) < \infty$, was assumed in \cite{Fasen05, FasenKlupp}.

In all the results of this section we assume that 
$\int_{(0,1]} z \lambda (\dd z) < \infty$. 
Since the drift parameter $m$ in \eqref{eq:Lambda} only adds a deterministic 
constant, we further assume that 
$m = \int_{(0,1]} z \lambda(\dd z)$, so that there is no 
centering in \eqref{eq:supOU-def}. 
In particular, the supOU process is positive.

In Proposition \ref{prop:largexi} below we show that if $m_0(\pi) = \infty$, 
then the process $X(t)$ has a 
rather strange path behavior. If the jump measure $\lambda$ is 
unbounded, then the paths of the process $X(t)$ are almost surely unbounded on 
any nonempty interval, in particular, the process has no c\`adl\`ag modification.
Such pathological path 
behavior was observed in \cite{Maejima} for linear fractional stable motion.
On the other hand, if $\lambda$ is a stable L\'evy measure and 
$\pi$ is a finite measure such that $m_\delta(\pi) < \infty$
for some $\delta > 0$ then the process has c\`adl\`ag modification, see
\cite{basse2020sufficient}.

By $(\xi_k, \tau_k, \zeta_k)_{k \geq 0}$ we denote the points of 
the  Poisson random measure $\mu$. With this 
notation, if $\int_{(0,1]} z \lambda(\dd z) < \infty$,
\begin{equation} \label{eq:X+sum}
X(t) = \sum_{k: \tau_k \leq t} e^{-\xi_k (t - \tau_k)} \zeta_k.
\end{equation}

\begin{proposition} \label{prop:largexi}
Assume that $\int_{(0,1]} z \lambda(\dd z) < \infty$.
If $m_0(\pi) = \pi((0,\infty)) = \infty$, then
\[
\sup_{t \in [0,1]} X(t) \geq \sup \{ z : \overline \lambda(z) > 0 \}.
\]
In particular, if the jump sizes are unbounded, then 
\[
\sup_{t \in [0,1]} X(t) = \infty  \quad \text{a.s.}
\]
\end{proposition}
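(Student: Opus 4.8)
The plan is to read the conclusion as a statement about the jump heights falling in the time window $[0,1]$, and to exploit that $X$ is a \emph{positive} process. Write $z^\ast := \sup\{z : \overline\lambda(z) > 0\}$. Under the standing assumption $\int_{(0,1]}z\,\lambda(\dd z)<\infty$ there is no centering and all summands of $X(t) = \sum_{k : \tau_k \le t} e^{-\xi_k(t-\tau_k)}\zeta_k$ are nonnegative, so for \emph{any} point $(\xi_k,\tau_k,\zeta_k)$ of $\mu$ with $\tau_k \in [0,1]$, discarding all but the $k$-th term and taking $t = \tau_k$ gives
\[
\sup_{t\in[0,1]} X(t) \ \ge\ X(\tau_k) \ \ge\ e^{-\xi_k(\tau_k-\tau_k)}\zeta_k \ =\ \zeta_k,
\]
an inequality valid in $[0,\infty]$ whether or not $X(\tau_k)$ happens to be finite. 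So it would be enough to show that almost surely $\mu$ produces points with $\tau_k \in [0,1]$ and $\zeta_k$ as close to $z^\ast$ as we like.

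For this I would fix $z_0 < z^\ast$ (we may take $z_0 > 0$, since $X \ge 0$), note that $0 < \overline\lambda(z_0) < \infty$ (positive by definition of $z^\ast$ and monotonicity of $\overline\lambda$; finite since $\lambda$ is a L\'evy measure, hence finite away from $0$), and look at the region $B_{z_0} := (0,\infty) \times [0,1] \times (z_0,\infty)$. Its $\nu$-mass is $\pi\big((0,\infty)\big) \cdot \Leb([0,1]) \cdot \overline\lambda(z_0) = m_0(\pi)\,\overline\lambda(z_0) = \infty$ by the hypothesis $m_0(\pi) = \infty$. Since $\int x^{-1}\pi(\dd x) < \infty$ forces $\pi((a,\infty)) < \infty$ for all $a > 0$, the measure $\pi$, and hence $\nu$ restricted to $B_{z_0}$, is $\sigma$-finite; exhausting $B_{z_0}$ by sets of finite $\nu$-mass and using that $\mu$ assigns independent Poisson values to disjoint sets, I would conclude $\mu(B_{z_0}) = \infty$ a.s., and in particular $\mu(B_{z_0}) \ge 1$ a.s. Feeding such a point into the display yields $\sup_{t\in[0,1]} X(t) \ge z_0$ a.s.

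To finish I would choose a sequence $z_0^{(n)} \uparrow z^\ast$, intersect the countably many almost sure events just obtained, and deduce $\sup_{t\in[0,1]} X(t) \ge z^\ast$ a.s.; when the jump sizes are unbounded, i.e.\ $z^\ast = \infty$, this is exactly $\sup_{t\in[0,1]} X(t) = \infty$ a.s. The argument is essentially soft, so there is no real hard part; the only two places needing a little care are (i) the passage from infinite intensity to the existence of at least one point, which is where $\sigma$-finiteness of $\nu$ and the independence properties of $\mu$ enter, and (ii) resisting the temptation to claim $X(\tau_k) < \infty$ at the random time $\tau_k$ — this is neither needed nor convenient to justify, whereas the lower bound uses only nonnegativity of the omitted terms, which is immediate.
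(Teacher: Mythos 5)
Your proposal is correct and follows essentially the same route as the paper: bound $\sup_{t\in[0,1]}X(t)$ below by the jump sizes $\zeta_k$ of Poisson points with $\tau_k\in[0,1]$ (using positivity of all the other terms), and use $m_0(\pi)=\infty$ to force infinitely many such points. Your execution is in fact slightly more explicit than the paper's — you compute $\nu\bigl((0,\infty)\times[0,1]\times(z_0,\infty)\bigr)=m_0(\pi)\,\overline\lambda(z_0)=\infty$ for each $z_0$ below the essential supremum and then let $z_0$ increase along a countable sequence, whereas the paper simply invokes independence of the $\zeta_k$'s; both arguments are sound.
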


\begin{proof}
By \eqref{eq:X+sum} we have 
\[
\sup_{t \in [0,1]} X(t) \geq \sup \{ \zeta_k : \tau_k \in [0,1] \}.
\]
Since
\[
\pi \times \mathrm{Leb} \left( \{ (x, s ) : 0 \leq s \leq 1 \} \right)
= \pi ((0,\infty)) = \infty,
\]
there are infinitely many $k$'s such that $\tau_k < 1$. The 
corresponding $\zeta_k$'s are independent, thus the result follows.
\end{proof}

Therefore, in what follows we always assume that $m_0(\pi ) < \infty$,
otherwise all the expressions involving $\limsup X_t$ with 
unbounded jumps, would be infinite.

The main result of the paper is the following integral test.

\begin{theorem} \label{thm:as}
Assume that $\int_{(0,1]} z \lambda(\dd z) < \infty$ and 
$m_0(\pi)  < \infty$.
Let $f$ be a nondecreasing function tending to infinity, with $f(1) > 0$. 
If for some $K > 0$ 
\begin{equation} \label{eq:lambda-K-int} 
\int_1^\infty \overline \lambda(K f(t)) \dd t = \infty,
\end{equation}
then
\[
\limsup_{t \to \infty} \frac{X(t)}{f(t)} \geq K \quad \text{a.s.}
\]

For the converse assume that $f(1) > 0$,
$f$ is strictly increasing, differentiable,  and 
its inverse $f^{\leftarrow}$ is submultiplicative.
If for some $L > 0$
\begin{equation} \label{eq:eta-lambda-int} 
\int_1^\infty \left( \overline \lambda(L f(t)) + 
\overline \eta(L f(t)) \right) \dd t < \infty,
\end{equation}
then
\[
\limsup_{t \to \infty} \frac{X(t)}{f(t)} \leq 2 L \quad \text{a.s.}
\]
\end{theorem}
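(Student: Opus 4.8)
The plan is to treat the two halves separately, each via a Borel--Cantelli argument adapted to the Poisson structure.

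For the lower bound, I would discretize time and exhibit, infinitely often, a single large jump that alone forces $X(t) \geq K f(t)$. Concretely, split $[0,\infty)$ into unit intervals $[n,n+1]$ (or dyadic blocks chosen so that $f$ varies by a bounded factor across each block). On the block near time $t \approx n$, the contribution of one Poisson point $(\xi_k,\tau_k,\zeta_k)$ with $\tau_k \in [n-1,n]$, $\xi_k$ bounded (say $\xi_k \le 1$), and $\zeta_k$ large, is at least $e^{-1}\zeta_k$ evaluated at $t$ slightly after $\tau_k$; so asking $\zeta_k > K' f(n)$ with $K'$ a fixed multiple of $K$ suffices. The events $A_n = \{\exists k: \tau_k\in[n-1,n],\ \xi_k\le 1,\ \zeta_k > K' f(n)\}$ are independent across disjoint blocks, and $\p(A_n)$ is, up to constants, $1-\exp(-c\,\overline\lambda(K'f(n)))\asymp \min(1, \overline\lambda(K'f(n)))$. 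Since $f$ is monotone, $\sum_n \overline\lambda(K'f(n)) $ and $\int_0^\infty \overline\lambda(K f(t))\dd t$ converge or diverge together (after adjusting the constant $K'$ versus $K$ using monotonicity of $\overline\lambda$ and $f$); hypothesis \eqref{eq:lambda-K-int} then gives $\sum \p(A_n)=\infty$, and the second Borel--Cantelli lemma yields $A_n$ i.o., hence $\limsup X(t)/f(t) \ge K$ almost surely. The constants require care — one must arrange that the big jump lands early enough in its block that $e^{-\xi_k(t-\tau_k)}$ has not decayed, and that the mean-reversion parameter $\pi$ indeed charges $\{\xi\le 1\}$ with positive mass (which it does, since $m_{-1}(\pi)<\infty$ and $\pi$ is nontrivial) — but these are routine.

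For the upper bound, the idea is to bound $X(t)$ uniformly on dyadic time blocks $I_j = [2^j, 2^{j+1}]$ and show $\sup_{t\in I_j} X(t) \le 2L f(2^j)$ for all large $j$, almost surely, via the first Borel--Cantelli lemma. Decompose $X(t) = X^{\le}(t) + X^{>}(t)$ into contributions from jumps $\zeta_k \le Lf(2^j)$ (the "small" part, threshold depending on the block) and $\zeta_k > Lf(2^j)$ (the "large" part). For the large part: the probability that any Poisson point with $\tau_k \le 2^{j+1}$ has $\zeta_k > L f(2^j)$ is, by the intensity formula, at most $C\, 2^{j}\,\overline\lambda(Lf(2^j))$, and one checks $\sum_j 2^j \overline\lambda(Lf(2^j)) < \infty$ follows from $\int_1^\infty \overline\lambda(Lf(t))\dd t<\infty$ because $f$ is increasing (so $\overline\lambda(Lf(t))$ is decreasing and the dyadic sum is comparable to the integral). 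On the complementary event the large part is identically zero on $I_j$. For the small part, $X^{\le}(t)$ is an infinitely divisible random variable whose Lévy measure is $\eta$ restricted to $(0, Lf(2^j)]$; one needs $\p(\sup_{t\in I_j} X^{\le}(t) > 2Lf(2^j))$ to be summable in $j$. Here I would use a maximal/exponential (Markov) bound: since the truncated jumps are bounded by $Lf(2^j)$, a Chernoff estimate on $X^{\le}$ at a fixed time, upgraded to the supremum over $I_j$ either by a stationarity-plus-union argument over a fine grid or by noting that between consecutive jumps the path only decays, gives a bound of the form $\exp(-c\, 2^j \cdot (\text{rate}))$ plus a term controlled by $\overline\eta(Lf(2^j))$ coming from the exponential moment of the truncated Lévy integral — precisely the reason $\overline\eta(Lf(t))$ appears in \eqref{eq:eta-lambda-int}. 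Summability of $\sum_j \overline\eta(Lf(2^j))$ again follows from the integral hypothesis and monotonicity. Combining, Borel--Cantelli gives $\sup_{t\in I_j} X(t) \le 2Lf(2^j) \le 2Lf(t)$ eventually, whence $\limsup X(t)/f(t)\le 2L$.

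The main obstacle I anticipate is the small-jump part of the upper bound: converting a one-time-point tail estimate for the infinitely divisible variable $X^{\le}(t)$ into a bound on its supremum over the whole block $I_j$, while keeping the threshold at exactly $2L$ rather than some larger multiple of $L$. The process $X$ is not a Lévy process, so standard maximal inequalities for Lévy processes do not apply directly; one must exploit the specific structure — that each summand $e^{-\xi_k(t-\tau_k)}\zeta_k$ is, as a function of $t\ge \tau_k$, nonincreasing, so $\sup_{t\in I_j}X(t)$ is governed by the values at the (countably many) jump epochs, together with the boundary point $2^j$ — and then apply a Chernoff bound at those epochs with a union over the Poisson points in the block. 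Controlling the exponential moment of the truncated integral is exactly where the inequality $\overline\eta \le C\overline\lambda$ from Lemma \ref{lemma:domvar} would be used if $\overline\lambda\in\mathcal D$, but since Theorem \ref{thm:as} does not assume that, the term $\overline\eta(Lf(t))$ must be carried along explicitly in \eqref{eq:eta-lambda-int}, which is why it appears there. The submultiplicativity of $f^{\leftarrow}$ and differentiability of $f$ are presumably what let one pass cleanly between the continuous integral test and its discretization and between thresholds $Lf(2^j)$ and $Lf(t)$ for $t\in I_j$; I would keep track of exactly where those hypotheses are invoked.
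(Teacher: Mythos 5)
Your lower bound is essentially the paper's argument (the paper takes $A_n=\{(x,s,z):z>Kf(n),\,s\in(n-1,n]\}$, notes $\nu(A_n)=m_0(\pi)\overline\lambda(Kf(n))$, and applies the second Borel--Cantelli lemma), but as written you lose the constant: by evaluating ``slightly after'' $\tau_k$ and restricting to $\xi_k\le 1$ you need $\zeta_k>eKf(n)$ (and $\pi((0,1])$ may well be zero, e.g.\ $\pi=\delta_2$), and divergence of $\int\overline\lambda(Kf(t))\,\dd t$ does \emph{not} imply divergence of $\int\overline\lambda(eKf(t))\,\dd t$ without dominated variation --- which is exactly what this theorem avoids assuming. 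The fix is to evaluate at the jump epoch itself: since all summands are nonnegative and $e^{-\xi_k\cdot 0}=1$, one has $X(\tau_k)\ge\zeta_k>Kf(n)\ge Kf(\tau_k)$, with no restriction on $\xi_k$ and no constant lost.

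The upper bound has two genuine gaps. First, the large-jump estimate $\p(\exists k:\tau_k\le 2^{j+1},\ \zeta_k>Lf(2^j))\le C\,2^j\overline\lambda(Lf(2^j))$ is false: the time coordinate of the Poisson points runs over $(-\infty,2^{j+1}]$, so the relevant set has infinite $\nu$-measure and this probability equals $1$ whenever $\overline\lambda(Lf(2^j))>0$. The large jumps in the distant past must be absorbed into the stationary marginal, whose tail then has to be controlled separately --- precisely the step your decomposition does not supply. Second, the small-jump supremum bound is left as an acknowledged obstacle, and it is doubtful a Chernoff bound can deliver summability at threshold $2Lf(2^j)$ from hypothesis \eqref{eq:eta-lambda-int}, which is a first-moment-type condition, not an exponential-moment condition; your stated reason for the presence of $\overline\eta$ in \eqref{eq:eta-lambda-int} is not the actual one. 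The paper's route is much simpler and avoids both issues: for $t\in(n,n+1]$ one bounds $X(t)\le X(n)+Y_n$ with $Y_n=\sum_{\tau_k\in(n,n+1]}\zeta_k$, an iid sequence of infinitely divisible variables with L\'evy measure $\lambda$; then $\sum_n[\p(X(0)>Lf(n))+\p(Y_1>Lf(n))]<\infty$ is equivalent to $\E f^{\leftarrow}(X(0)/L)+\E f^{\leftarrow}(Y_1/L)<\infty$, which by Sato's Theorem 25.3 (this is where submultiplicativity of $f^{\leftarrow}$ enters --- not the discretization) is equivalent to \eqref{eq:eta-lambda-int} after integration by parts; the $\overline\eta$ term is the tail of the L\'evy measure of the stationary marginal $X(0)$, and the $\overline\lambda$ term that of $Y_1$. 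No truncation, no dyadic blocks, and no maximal inequality are needed.
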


\begin{remark}
Observe that the almost sure growth rate does not depend
on the measure $\pi$, which determines the correlation function.
\end{remark}

\begin{remark} \label{remark:pos-neg}
From Theorem \ref{thm:as} the almost sure growth rate of a not necessarily 
positive supOU process with $\int_{[-1,1]} |z| \lambda(\dd z) < \infty$ 
can be deduced as follows.
The jump measure $\lambda$ can be written as a sum of a measure on
$(-\infty, 0)$ and a measure on $(0,\infty)$. This results in a decomposition of
the corresponding supOU process as 
$X(t)  =X_+ (t) - X_{-}(t)$, where $X_+$ and $X_-$ are independent 
positive supOU processes with jump measures 
$\lambda_+ (A) = \lambda( A \cap (0,\infty) )$ and 
$\lambda_-(A) = \lambda(-A \cap (-\infty, 0))$, for any Borel set $A \subset (0,\infty)$.
Then, for a nondecreasing function $f$ tending to infinity, with $f(1) > 0$, we have simply
\begin{equation}\label{eq:XandX+leq}
\limsup_{t \to \infty} \frac{X(t)}{f(t)} \leq \limsup_{t \to \infty} 
\frac{X_+(t)}{f(t)}.
\end{equation}
On the other hand, if $\limsup_{t \to \infty} \tfrac{X_+(t)}{f(t)} > K$, 
then there exists a random sequence $T_n \uparrow \infty$ independent of 
$X_{-}$ such that $\tfrac{X_+(T_n)}{f(T_n)} > K$.
Since $X_-$ is a stationary process and $T_n$ is independent of $X_{-}$,
the variables $(X_-(T_n))_n$ are identically distributed.
Using also that $T_n \uparrow \infty$ we obtain that $X_-(T_n) / f(T_n) \to 0$ 
in probability, which implies the almost sure convergence on a subsequence, that is
$\liminf_{n \to \infty} \tfrac{X_-(T_n)}{f(T_n)} = 0$ a.s. Therefore, 
$\limsup_{t \to \infty} \tfrac{X(t)}{f(t)} > K$. Since $K$ was arbitrary, together with \eqref{eq:XandX+leq} this implies that
\[
\limsup_{t \to \infty} \frac{X(t)}{f(t)} = \limsup_{t \to \infty} 
\frac{X_+(t)}{f(t)}.
\]
\end{remark}

\begin{proof}
First assume that the integral diverges. This already implies that 
$\overline \lambda(x) > 0$ for any $x > 0$.
Define
\[
A_n = \{ (x, s, z): z > K f(n), s \in (n-1, n], x>0 \}. 
\]
Then
\[
\nu(A_n) = \overline \lambda(K f(n)) m_0 (\pi).
\]
Since $\lim_{x \to \infty} f(x) = \infty$, $\overline \lambda$ is nonincreasing,
and $\nu(A_n) \to 0$, assumption \eqref{eq:lambda-K-int} implies that 
\[
\sum_{n = 1}^\infty \overline \lambda(K f(n) ) = \infty.
\]
The events $\{ \mu(A_n) \geq 1 \}$ are independent, and 
\[
\p ( \mu(A_n ) \geq 1) = 1- e^{-\nu(A_n)} \sim \nu(A_n).
\]
The second Borel--Cantelli lemma implies that almost surely 
$\mu(A_n) \geq 1$ infinitely often, thus
$\sup_{t \in (n-1,n]} X(t) > K f(n)$ infinitely often, implying
\[
\limsup_{t \to \infty} \frac{X(t)}{f(t)} \geq K.
\]

\smallskip 

For the converse, first note that for $t \in (n, n+1]$
\[
\begin{split}
X(t) & =\sum_{k: \tau_k \leq n } e^{-\xi_k (t - \tau_k)} \zeta_k + 
\sum_{k: \tau_k \in (n,t] } e^{-\xi_k (t - \tau_k)} \zeta_k \\
& \leq X(n)  + \sum_{k: \tau_k \in (n,n+1]} \zeta_k 
=: X(n) + Y_n.
\end{split}
\]
Then $(Y_n)_n$ is an iid sequence of infinitely divisible random variables
such that 
\[
\E e^{\ii \theta Y_1} = \exp \left\{ \int_{(0,\infty)} 
( e^{\ii \theta z } -1 ) m_0(\pi)  \lambda(\dd z) \right\},
\]
that is, its L\'evy measure is $m_0(\pi) \lambda$. Therefore, 
by an application of the Borel--Cantelli lemma, it is enough to show 
that
\begin{equation*} 
\sum_{n=1}^\infty \left[  \p ( X(0) > L f(n) )  +
\p ( Y_1 > L f(n) ) \right] < \infty. 
\end{equation*}
The latter is finite if and only if 
\[
\E f^{\leftarrow}(X(0)/L) + 
\E f^{\leftarrow}(Y_1/L)  < \infty.
\]
Applying \cite[Theorem 25.3]{Sato}, this is further equivalent to
\[
\int_{(1,\infty)} f^{\leftarrow}(z/L) \eta(\dd z) 
+ \int_{(1,\infty)} f^{\leftarrow}(z/L) \lambda(\dd z) 
< \infty.
\]
Integrating by parts and changing variables, we get that this is
equivalent to \eqref{eq:eta-lambda-int}.
\end{proof}

If $\overline \lambda \in \mathcal{D}$, then \eqref{eq:lambda-K-int} 
for some $K > 0$
implies \eqref{eq:lambda-K-int} for all $K > 0$, thus the limsup is infinite.
Similarly, if \eqref{eq:eta-lambda-int} is finite for some $L > 0$, 
then by Lemma \ref{lemma:domvar} it is finite for any $L > 0$, implying 
that the limsup is 0.
That is, there exists no proper normalization for the limit superior.
This is a common phenomena for maxima of heavy-tailed processes, see 
\cite[Theorem 3.5.1]{EKM} in the classical iid case, and 
\cite[Theorem III.13]{Bertoin} for subordinators.
In particular, if $\overline \lambda \in \mathcal{D}$ and 
\eqref{eq:lambda-growth}
holds, then we get a proper integral test for the growth rate.

\begin{corollary} \label{cor:supOU-as-D}
Assume that $\int_{(0,1]} z \lambda(\dd z) < \infty$, 
$m_0(\pi)  < \infty$, $\overline \lambda \in \mathcal{D}$, and 
\eqref{eq:lambda-growth}.
Assume that $f$ is a differentiable increasing function tending to infinity,
$f(1) > 0$, and its inverse $f^{\leftarrow}$ is submultiplicative. 
Then almost surely
\[
\limsup_{t \to \infty} \frac{X(t)}{f(t)} = 0 \quad \text{or } \quad \infty,
\]
according to whether 
\begin{equation}\label{e:cor2:test}
\int_1^\infty \overline \lambda( f(t)) \dd t < \infty 
\quad \text{or } \quad  = \infty.
\end{equation}
\end{corollary}

In particular, if $\overline \lambda \in \mathcal{RV}_{-\gamma}$, 
then almost surely $\limsup_{t \to \infty} \frac{X(t)}{t^a} = 0$ 
for $a>1/\gamma$ and $\limsup_{t \to \infty} \frac{X(t)}{t^a} = \infty$ 
for $a<1/\gamma$. For $a=1/\gamma$, 
the limit superior is zero or infinite according to whether 
$\ell(t^{1/\gamma}) t^{-1}$ is integrable or not, with $\ell$ being 
the slowly varying function in the representation of $\overline \lambda$,
i.e.~$\overline \lambda(t) = \ell(t) t^{-\gamma}$.

Dominated variation implies the heavy-tailed property, 
meaning that all the exponential moments are infinite. 
Theorem \ref{thm:as} is applicable for light-tailed L\'evy measures as 
well. Next we assume that the tail $\overline \lambda$ 
is exponential.

\begin{corollary} \label{cor:exp}
Assume that  $\int_{(0,1]} z \lambda(\dd z) < \infty$, 
$m_0(\pi)  < \infty$, and that for some $0 < c < c' < \infty$
\[
\int_{(1,\infty)} e^{c z} \lambda(\dd z) < \infty, \quad 
\int_{(1,\infty)} e^{c' z} \lambda(\dd z) = \infty.
\]
Then
\[
\frac{1}{c'} \leq \limsup_{t \to \infty} \frac{X(t)}{\log t} 
\leq \frac{2}{c} \quad \text{a.s.}
\]
\end{corollary}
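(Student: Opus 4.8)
The plan is to apply Theorem \ref{thm:as} with $f(t) = \log t$, and verify that the exponential integrability hypotheses on $\lambda$ translate exactly into the divergence/convergence conditions \eqref{eq:lambda-K-int} and \eqref{eq:eta-lambda-int}. First I would record that $f(t) = \log t$ is strictly increasing, differentiable on $(1,\infty)$, tends to infinity, and has inverse $f^{\leftarrow}(y) = e^y$, which is submultiplicative (indeed multiplicative: $e^{x+y} = e^x e^y$). So all the structural assumptions of both halves of Theorem \ref{thm:as} are met once we shift the lower integration limit past $1$, which is harmless since $\overline\lambda$ and $\overline\eta$ are bounded.

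For the lower bound, I would show that $\int_{(1,\infty)} e^{c'z}\lambda(\dd z) = \infty$ forces \eqref{eq:lambda-K-int} to diverge for $K = 1/c'$. The key computation is the change of variable in the integral test: with $f(t) = \log t$, substituting $t = e^{y/K}$ (so $\dd t = K^{-1} e^{y/K}\dd y$) gives $\int_1^\infty \overline\lambda(K\log t)\,\dd t = K^{-1}\int_0^\infty e^{y/K}\,\overline\lambda(y)\,\dd y$. Taking $K = 1/c'$ yields $c'\int_0^\infty e^{c'y}\,\overline\lambda(y)\,\dd y$, and by Fubini (or integration by parts) $\int_1^\infty e^{c'y}\overline\lambda(y)\,\dd y$ is comparable to $\int_{(1,\infty)} e^{c'z}\lambda(\dd z)$ up to an additive constant and bounded factors — precisely, $\int_{(1,\infty)}(e^{c'z}-e^{c'})\lambda(\dd z) = c'\int_1^\infty e^{c'y}\overline\lambda(y)\,\dd y$ after interchanging the order of integration. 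Hence the latter is infinite, so \eqref{eq:lambda-K-int} holds with $K = 1/c'$, and Theorem \ref{thm:as} gives $\limsup_{t\to\infty} X(t)/\log t \geq 1/c'$ a.s.

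For the upper bound, I would use $\int_{(1,\infty)} e^{cz}\lambda(\dd z) < \infty$ to establish \eqref{eq:eta-lambda-int} with $L = 1/c$. By the same change of variables, $\int_1^\infty \overline\lambda(L\log t)\,\dd t = L^{-1}\int_0^\infty e^{y/L}\overline\lambda(y)\,\dd y = c\int_0^\infty e^{cy}\overline\lambda(y)\,\dd y < \infty$ by the Fubini argument above. For the $\overline\eta$ term, recall from \eqref{eq:eta-bar} that $\overline\eta(r) = \int_r^\infty z^{-1}\overline\lambda(z)\,\dd z$; since $\overline\lambda$ is nonincreasing and integrable against $e^{cz}$, a short estimate shows $\overline\eta$ also has an exponentially integrable tail — e.g. $\int_1^\infty e^{cr}\overline\eta(r)\,\dd r = \int_1^\infty e^{cr}\int_r^\infty z^{-1}\overline\lambda(z)\,\dd z\,\dd r \le c^{-1}\int_1^\infty z^{-1}e^{cz}\overline\lambda(z)\,\dd z \le c^{-1}\int_1^\infty e^{cz}\overline\lambda(z)\,\dd z < \infty$ after swapping the order of integration and bounding $\int_1^r e^{cr'}\,\dd r' \le c^{-1}e^{cr}$. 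Converting back via $t = e^{cr}$ shows $\int_1^\infty \overline\eta(L\log t)\,\dd t < \infty$ as well. Thus \eqref{eq:eta-lambda-int} holds with $L = 1/c$, and Theorem \ref{thm:as} yields $\limsup_{t\to\infty} X(t)/\log t \le 2/c$ a.s.

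The only mild subtlety — not really an obstacle — is the bookkeeping in passing between $\int_{(1,\infty)} e^{cz}\lambda(\dd z)$ and $\int_1^\infty e^{cz}\overline\lambda(z)\,\dd z$: these differ by the additive constant $e^c\overline\lambda(1)/c$ and a factor of $c$, coming from the integration-by-parts identity $\int_{(1,\infty)}(e^{cz}-e^c)\lambda(\dd z) = c\int_1^\infty e^{cz}\overline\lambda(z)\,\dd z$, so finiteness of one is equivalent to finiteness of the other. Everything else is the direct substitution $t \leftrightarrow e^{y/K}$ and an appeal to Theorem \ref{thm:as}; no new probabilistic input beyond the two halves of that theorem is needed.
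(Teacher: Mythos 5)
Your proposal is correct and follows exactly the paper's route: apply Theorem \ref{thm:as} with $f(t)=\log t$, $K=1/c'$, $L=1/c$, translating the exponential moment conditions on $\lambda$ into \eqref{eq:lambda-K-int} and \eqref{eq:eta-lambda-int} via the substitution $t=e^{y/K}$ and Fubini. The only cosmetic difference is that the paper outsources the equivalence between $\int_{(1,\infty)}e^{cz}\lambda(\dd z)<\infty$ and the integrability of $e^{cy}\overline\lambda(y)$ (and the corresponding $\overline\eta$ statement) to Corollary \ref{cor:Xmoments}, whereas you redo that Fubini computation by hand.
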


\begin{proof}
Apply Theorem \ref{thm:as} with $f(t) = \log t$, 
for $t \geq e$, and 
$K = 1/c'$, $L= 1/c$, together with Corollary \ref{cor:Xmoments}.
\end{proof}

Finally, we consider the case when the L\'evy measure
$\lambda$ has bounded support. 
In this case Theorem \ref{thm:as} does not apply,
since the integral in \eqref{eq:lambda-K-int} is always finite.
Then the process has similar almost sure growth as iid Poisson random variables.

\begin{theorem} \label{thm:as-boundedlambda}
Assume that $\int_{(0,1]} z \lambda (\dd z) < \infty$, $m_0(\pi)<\infty$, and   
$ \inf \{ y : \overline \lambda(y) = 0 \} < \infty$. Then 
\[
0 < \limsup_{t \to \infty} \frac{\log \log t}{\log t} X(t) < \infty \quad \text{a.s.}
\]
\end{theorem}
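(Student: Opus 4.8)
The plan is to show that $\limsup_{t\to\infty}X(t)/f(t)$ is a.s.\ finite and strictly positive for $f(t)=\log t/\log\log t$. Note first that Theorem \ref{thm:as} cannot be applied directly: with this $f$ the inverse $f^{\leftarrow}$ grows like $y\mapsto y^{y}$, which is not submultiplicative, and moreover when $\overline\lambda$ has bounded support both $\overline\lambda$ and $\overline\eta$ vanish on $(b,\infty)$ with $b:=\inf\{y:\overline\lambda(y)=0\}$, so the integrals \eqref{eq:lambda-K-int} and \eqref{eq:eta-lambda-int} are automatically finite for every $K,L$. Hence both bounds have to be obtained by direct Borel--Cantelli arguments, but with quantitative Poisson tail estimates instead of the crude events $\{\mu(A_n)\ge 1\}$ used in Theorem \ref{thm:as}. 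Throughout one may assume $\lambda\neq 0$ and $\pi\neq 0$ (otherwise $X\equiv 0$), so that $b\in(0,\infty)$, $\overline\lambda(y)>0$ for all $y<b$, and $\pi((0,x_0])>0$ for some $x_0>0$.

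\textbf{Upper bound.} As in the converse part of Theorem \ref{thm:as}, for $t\in(n,n+1]$ we have $X(t)\le X(n)+Y_n$ with $(Y_n)$ i.i.d.\ and L\'evy measure $\lambda$, so by Borel--Cantelli and stationarity it suffices to show $\sum_n\big[\p(X(0)>\tfrac{C}{2}f(n))+\p(Y_1>\tfrac{C}{2}f(n))\big]<\infty$ for $C$ large. Since $\overline\lambda$, hence $\overline\eta$, is supported in $(0,b]$, all exponential moments are finite: using $\E e^{\ii\theta X(0)}$ together with \eqref{eq:eta-integral} one checks that for $s\ge 1$, $\log\E e^{sX(0)}\le sA+\kappa s^2 e^{sb}$ and $\log\E e^{sY_1}\le\kappa' s e^{sb}$, where the constants come from $\int_{(0,1]}y\overline\lambda(y)\,\dd y$, $\int_1^b y^{-1}\overline\lambda(y)\,\dd y$ and $\int z\,\lambda(\dd z)$ — all finite, the first because Fubini turns it into $\tfrac12\int(1\wedge z^2)\lambda(\dd z)+\tfrac12\lambda((1,b])$. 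Optimizing the exponential Markov inequality with $s=b^{-1}(\log x-3\log\log x)$ gives $\p(X(0)>x)\vee\p(Y_1>x)\le\exp\big(-b^{-1}x\log x\,(1+o(1))\big)$. Plugging $x=\tfrac{C}{2}f(n)$ and using $x\log x\sim \tfrac{C}{2}\log n$ makes each probability $n^{-C/(2b)+o(1)}$, which is summable once $C>2b$. Hence $\limsup_{t\to\infty}X(t)/f(t)\le C<\infty$ a.s.

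\textbf{Lower bound.} Fix $a\in(0,b)$ with $p:=\overline\lambda(a)\in(0,\infty)$, $x_0>0$ with $q:=\pi((0,x_0])>0$, and $\delta:=\min\{1/2,(\log 2)/x_0\}$. Let $E_n$ be the event that $\mu$ has at least $c f(n)$ points in $(0,x_0]\times(n+1-\delta,n+1]\times(a,\infty)$. On $E_n$, positivity of $X$ gives $X(n+1)\ge c f(n)\,e^{-x_0\delta}a\ge\tfrac{ac}{2}f(n)$, and since $f(n)/f(n+1)\to1$ this is $\ge\tfrac{ac}{4}f(n+1)$ for $n$ large. The $E_n$ are independent (disjoint regions of $\mu$), and $\p(E_n)=\p\big(\mathrm{Poisson}(q\delta p)\ge c f(n)\big)\ge\exp\big(-c f(n)\log f(n)\,(1+o(1))\big)=n^{-c+o(1)}$, using $\p(\mathrm{Poisson}(\Lambda)\ge K)\ge e^{-\Lambda}(\Lambda/(K+1))^{K+1}$ and $f(n)\log f(n)\sim\log n$. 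Taking $c<1$ (say $c=1/2$) makes $\sum_n\p(E_n)=\infty$, so the second Borel--Cantelli lemma yields $E_n$ infinitely often a.s., whence $\limsup_{t\to\infty}X(t)/f(t)\ge ac/4>0$ a.s.

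\textbf{Main obstacle.} The Borel--Cantelli bookkeeping and the Poisson lower-tail bound are routine; the delicate point is matching constants in the large-deviation estimate, namely that the tails of $X(0)$ and of $Y_1$ are $\exp(-b^{-1}x\log x\,(1+o(1)))$ with the \emph{correct} $b$, and that the polynomial-in-$s$ prefactors ($s^2$ for $X(0)$, $s$ for $Y_1$) are negligible after optimizing $s$. The one bookkeeping step worth spelling out is the finiteness of $\int_{(0,1]}y\overline\lambda(y)\,\dd y$ from $\int(1\wedge z^2)\lambda(\dd z)<\infty$ and $\int_{(0,1]}z\lambda(\dd z)<\infty$.
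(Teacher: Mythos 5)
Your proof is correct and follows essentially the same route as the paper: the decomposition $X(t)\le X(n)+Y_n$ with an exponential Chebyshev bound at scale $s\approx b^{-1}\log\log t$ for the upper bound, and counting Poisson points with small mean-reversion parameter near the right endpoint of each unit interval, combined with the second Borel--Cantelli lemma, for the lower bound. The only differences are cosmetic: you treat the general bounded-support case directly with explicit Chernoff and Poisson lower-tail estimates, whereas the paper first works out the case $\lambda=\delta_{\{1\}}$ and then cites Sato and Embrechts--Kl\"uppelberg--Mikosch for the tail of $Y_1$ and the growth of iid maxima.
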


\begin{proof}
In this case the moment generating function of $X(t)$ 
exists and, by \eqref{eq:eta-integral}
\begin{equation*}
\begin{split}
\Phi(t,s) & = \Phi(s) 
= \log  \E e^{s X(t)} = \int_{(0,\infty)} \left( e^{sy} - 1 \right)
\eta(\dd y) \\
& = m_{-1}(\pi) \int_0^\infty \overline \lambda(z) \frac{e^{sz} -1}{z} \dd z.
\end{split}
\end{equation*}
Using Markov's inequality
\begin{equation} \label{eq:Markov-bound}
\p ( X(t) > d) = \p (e^{s X(t)} > e^{sd} ) 
\leq \exp \left\{ \Phi(s) - sd \right\}.
\end{equation}

First assume that  $\lambda = \delta_{\{ 1 \}}$. We show that
\begin{equation} \label{eq:Poisson-as}
e^{-\varepsilon} \leq \limsup_{t \to \infty} \frac{\log \log t}{\log t}
X(t) \leq 2.
\end{equation}

As $s \to \infty$
\[
\Phi(s) = m_{-1}(\pi) \int_0^1 \frac{e^{sz} - 1}{z} \dd z = 
m_{-1}(\pi) \int_0^s \frac{e^{y} - 1}{y} \dd y \sim 
m_{-1}(\pi) \frac{e^{s}}{s}.
\]
Fix $\kappa > 1$ and 
choose $d = \kappa \log t / (\log \log t)$, and 
$s = \log \log t$. Then by \eqref{eq:Markov-bound},
for any $0 < \varepsilon < \kappa - 1$ if $t$ is large enough
\[
\p ( X(t) > d ) \leq t^{-(\kappa - \varepsilon)}.
\]
That is,
\[
\limsup_{n \to \infty} \frac{\log \log n }{\log n} X(n) \leq 1.
\]
Next, for $t \in (n,n+1]$
\[
 X(t) = \sum_{k: \tau_k \leq t} e^{- \xi_k (t - \tau_k)} \leq 
 X(n) + Y_n,
\]
where $Y_n = \sum_{k: \tau_k \in (n,n+1]} 1$. 
Then $(Y_n)$ is an iid sequence of Poisson random variables with 
parameter $m_0(\pi)$. 
For supremum of iid Poisson random variables we have
(see e.g.~Exercise 3.2.3, p.~65, in \cite{ChowTeicher})
\[
\limsup_{n \to \infty} \frac{\log \log n}{\log n} Y_n = 1,
\]
implying that almost surely
\[
\limsup_{t \to \infty} \frac{\log \log t}{\log t} X(t) \leq 2.
\]

To prove the lower bound, 
fix $\varepsilon > 0$ such that $\pi((0,\varepsilon)) > 0$. 
Put 
\[
A_n = \{ ( s, x, z) : s \in (n,n+1], x < \varepsilon, z > 0 \}.
\]
Clearly, $A_n$'s are disjoint, and 
$
\nu(A_n) = \pi((0,\varepsilon)) > 0.
$
Therefore $(\Lambda(A_n))_n$ is a sequence of iid Poisson random variables
with parameter $\pi((0,\varepsilon))$ and hence 
\[
\limsup_{n \to \infty} \frac{\log \log n}{\log n} \Lambda(A_n) = 1.
\]
Furthermore, 
\[
X(n+1) = \sum_{k : \tau_k \leq n+1} e^{-\xi_k (n+1- \tau_k)} \geq 
\sum_{ k : (\tau_k, \xi_k, 1) \in A_n } e^{-\xi_k} \geq 
\Lambda(A_n) e^{-\varepsilon},
\]
showing that 
\[
\limsup_{n \to \infty} \frac{\log \log n}{\log n} X(n) \geq 
e^{-\varepsilon}.
\]
Thus the proof of \eqref{eq:Poisson-as} is complete. 

The general case follows easily. If $\lambda$ is a finite measure, one 
can just replace all jumps by $K = \inf \{ y : \overline \lambda(y) = 0 \} < \infty$.
Thus we only have to handle the case 
$\lambda((0,1]) = \infty$, and $\overline \lambda(1) = 0$. Then
\[
\Phi(s) = m_{-1}(\pi) \int_0^1 \overline \lambda(z) \frac{e^{sz} -1}{z} \dd z
\leq m_{-1}(\pi) e^s \, \int_0^1 \overline \lambda(z) \dd z.
\]
Thus choosing $d = \kappa \log t /(\log \log t)$ and 
$s = \log \log t - \log \log \log t$ 
in \eqref{eq:Markov-bound} we obtain as before 
\[
\limsup_{n\to \infty} \frac{\log \log n}{\log n} X(n) \leq 1.
\]
For $t \in (n,n+1]$
\[
X(t) = \sum_{k: \tau_k \leq t} \zeta_k e^{-\xi_k (t - \tau_k)} \leq 
X(n) + Y_n,
\]
where 
\[
Y_n = \sum_{k: \tau_k \in (n,n+1]} \zeta_k.
\]
The assumption $\int_{(0,1]} z \lambda( \dd z) < \infty$ implies that 
$Y_n$'s are finite random variables. 
Then $Y_1,Y_2,\ldots$ are iid infinitely divisible random variables with 
L\'evy measure concentrated on $(0,1]$. Thus,  by Theorem 26.1 in \cite{Sato}
\[
\p (Y_1 > r ) = o(e^{-r \log r}) \quad \text{as } r \to \infty.
\]
Then Theorem 3.5.1 in \cite{EKM} shows that almost surely
\[
\limsup_{n \to \infty} \frac{\log \log n}{\log n} \max \{ Y_i: i=1,2,\ldots, n\}
\leq 1,
\]
and the upper bound follows.

The lower bound follows similarly as above. Pick $\varepsilon' > 0$ such that 
$\overline \lambda(\varepsilon') > 0$, and let 
\[
A_n' = \{ (s, x,z) : s \in (n,n+1), x \in (0,\varepsilon), z > \varepsilon' \}.
\]
Then $(\Lambda(A_n'))_n$ are iid Poisson random variables, and 
\[
X(n+1) \geq \Lambda(A_n') e^{-\varepsilon} \varepsilon'.
\]
\end{proof}

\noindent \textbf{Acknowledgement.}
We are grateful to the anonymous referees for helpful comments and suggestions.
DG was partially supported by the Croatian Science Foundation 
(HRZZ) grant Scaling in Stochastic Models (IP-2022-10-8081). 
PK was supported by the
J\'{a}nos Bolyai Research Scholarship of the Hungarian Academy of Sciences,
and by the NKFIH grant FK124141.


\end{document}